\documentclass[12pt]{amsart} 
\usepackage{amsmath,mathtools,amsthm,amsfonts,amssymb,verbatim}
\usepackage{booktabs,multicol} 
\usepackage{color, colortbl} 
\usepackage{enumitem} 
\usepackage{url}
\usepackage{tikz} 
\usepackage[letterpaper]{geometry} 
\geometry{verbose,tmargin=0.8in,bmargin=0.8in,lmargin=0.8in,rmargin=0.8in} 
\usepackage{stmaryrd} 
\theoremstyle{definition} 
\newtheorem{theorem}[equation]{Theorem}

\newtheorem{proposition}[equation]{Proposition}

\theoremstyle{definition}
\newtheorem{definition}[equation]{Definition}

\newtheorem{rem}[equation]{Remark}
\newtheorem{exam}[equation]{Example}

\numberwithin{equation}{section}

\definecolor{mjo}{rgb}{0,0,.9}

\global\long\def\gen{\text{\sf GEN}}
\global\long\def\dng{\text{\sf DNG}}
\global\long\def\mex{\operatorname{mex}}
\global\long\def\emex{\operatorname{emex}}
\global\long\def\nim{\operatorname{nim}}
\global\long\def\opt{\operatorname{Opt}}

\global\long\def\pty{\operatorname{pty}}
\global\long\def\type{\operatorname{type}}
\global\long\def\etype{\operatorname{etype}}

\global\long\def\smo{\operatorname{smo}}

\newcommand{\notes}[1]{}


\begin{document} 
\setlength{\jot}{0pt} 

\title
[The spectrum of nim-values for achievement games for generating finite groups]
{The spectrum of nim-values for achievement games\\ for generating finite groups}

\author{Bret J.~Benesh}
\address{
Department of Mathematics,
College of Saint Benedict and Saint John's University,
37 College Avenue South,
Saint Joseph, MN 56374-5011, USA
}
\email{bbenesh@csbsju.edu}
\author{Dana C.~Ernst}
\author{N\'andor Sieben}
\address{
Department of Mathematics and Statistics,
Northern Arizona University PO Box 5717,
Flagstaff, AZ 86011-5717, USA
}
\email{Dana.Ernst@nau.edu, Nandor.Sieben@nau.edu}

\subjclass[2010]{91A46, 
20D30
} 

\keywords{maximal subgroups, computational group theory}

 \date{\today}

\maketitle


\begin{abstract}
We study an impartial achievement game introduced by Anderson and Harary. The game is played by two players who alternately select previously unselected elements of a finite group. The game ends when the jointly selected elements generate the group. The last player able to make a move is the winner of the game. We prove that the spectrum of nim-values of these games is $\{0,1,2,3,4\}$.  This positively answers two conjectures from a previous paper by the last two authors.
\end{abstract}


\section{Introduction}

Anderson and Harary~\cite{anderson.harary:achievement} introduced two impartial games \emph{Generate} and \emph{Do Not Generate} in which two players alternately take turns selecting previously unselected elements of a finite group $G$. The first player who builds a generating set for the group from the jointly-selected elements wins the \emph{achievement game} $\gen(G)$.  The first player who cannot select an element without building a generating set loses the \emph{avoidance game} $\dng(G)$. The outcomes of both games were studied for some of the more familiar finite groups, including abelian, dihedral, and symmetric groups in~\cite{anderson.harary:achievement,Barnes}.

A fundamental problem in the theory of impartial combinatorial games~\cite{albert2007lessons,SiegelBook} is determining the nim-value of a game.  The nim-value determines the outcome of the game, and it also allows for the easy calculation of the nim-values of game sums.
In~\cite{ErnstSieben}, Ernst and Sieben used structure digraphs for studying the nim-values of both the achievement and avoidance games, which they applied in the context of certain finite groups including cyclic, abelian, and dihedral. Loosely speaking, a structure digraph is a quotient of the game digraph by an equivalence relation called \emph{structure equivalence}. Structure equivalence respects the nim-values of the positions of the game and drastically simplifies the calculation of the nim-values. The \emph{type} of a structure class is a triple that encodes the nim-values of the positions. Ernst and Sieben~\cite[Proposition~3.20]{ErnstSieben} determined the spectrum of types for the avoidance game $\dng(G)$, which in turn allowed them to determine that the spectrum of nim-values for $\dng(G)$ is $\{0,1,3\}$.

The goal of this paper is to determine the spectrum of nim-values for the achievement game $\gen(G)$. Our approach is very similar to that of the avoidance game, but the required calculations are significantly more difficult for groups of even order. One reason for the increased difficulty is that the game digraph of the avoidance game is a subgraph of the the game digraph of the achievement game, and hence the achievement game has more positions than the avoidance game. As a result, the structure digraphs for achievement games can be more complex. Moreover, the types associated to structure classes no longer suffice since types contain insufficient information to be closed under type calculus. To overcome this apparent shortcoming, we introduce the \emph{extended type} of a structure class, which adds a fourth component to the existing type. To analyze the behavior of the structure digraphs together with the associated extended types, we develop several type restrictions and then rely on computer calculations to handle the large number of cases. We prove that the spectrum of nim-values for the achievement game $\gen(G)$ is $\{0,1,2,3,4\}$, which positively answers Conjectures~4.8 and~4.9 from \cite{ErnstSieben}.

The structure of the paper is as follows.  We start with some preliminaries from~\cite{BeneshErnstSiebenSymAlt,BeneshErnstSiebenDNG,BeneshErnstSiebenGeneralizedDihedral,ErnstSieben}, and follow with a short characterization of the spectrum of $\gen(G)$ for $G$ of odd order.  The bulk of the work is spent on characterizing the spectrum of $\gen(G)$ for $G$ of even order.

\section{Preliminaries}

We now give a more precise description of our game. We also recall some definitions and results from \cite{BeneshErnstSiebenGeneralizedDihedral, ErnstSieben}. The positions of $\gen(G)$ are the possible sets of jointly selected elements. The starting position is the empty set. The options of a nonterminal position $P$ are of the form $P\cup\{g\}$ for some $g\in G\setminus P$. The set of options of $P$ is denoted by $\opt(P)$. 

The \emph{nim-value} of a position $P$ is recursively defined by 
\[
\nim(P)=\mex\{\nim(Q)\mid Q\in \opt(P)\},
\]
where the \emph{minimum excludant} $\mex(S)$ is the smallest nonnegative integer missing from $S$. The terminal positions of the game have no options, and so their nim-value is $\mex(\emptyset)=0$. The winning positions for the player who is about to move ($N$-positions) are those with nonzero nim-value. The winning strategy always moves the opponent into a position with zero nim-value.

\subsection{Type calculus}

The set $\mathcal{M}$ of maximal subgroups of $G$ plays an important role in this game. For a position $P$ we let
\[
\lceil P \rceil := \bigcap\{M\in\mathcal{M} \mid P\subseteq M\}.
\]
We use the simplified notation $\lceil P,g_1,\ldots, g_n \rceil$ for $\lceil P\cup \{g_1,\ldots, g_n\} \rceil$. If $P$ is a terminal position of the game, then $P$ is a generating set of $G$, and so $\lceil P \rceil=\bigcap\emptyset=G$. Note that $\lceil \emptyset \rceil=\bigcap \mathcal{M}$ is the Frattini subgroup $\Phi(G)$. 

Two positions $P$ and $Q$ are \emph{structure equivalent} if $\lceil P \rceil=\lceil Q \rceil$. Structure equivalence is an equivalence relation.
The maximum element of the equivalence class of $P$ is $\lceil P \rceil$, so we denote the \emph{structure class} of $P$ by $X_I$ where $I=\lceil P \rceil$. The set of equivalence classes is denoted by $\mathcal{D}$. 

The option relationship between positions is compatible with structure equivalence \cite[Corollary 4.3]{ErnstSieben}, so we say $X_J$ is an option of $X_I$ if $Q\in \opt(P)$ for some $P\in X_I$ and $Q\in X_J$. The set of options of $X_I$ is denoted by $\opt(X_I)$.

The vertices of the \emph{structure digraph} are the structure classes. The arrows of this digraph connect structure classes to their options.

The parity of an integer $n$ is $\pty(n):=n\text{ mod }2$. By~\cite[Proposition 4.4]{ErnstSieben}, two positions in a structure class of the same parity have the same nim-value.  We can capture this information by defining the \emph{type} of a structure class $X_I$ to be
\[
\type(X_I):=(\pty(|I|), \nim(P),\nim(Q)),
\]
where $P,Q\in X_I$ with $\pty(|P|)=0$ and $\pty(|Q|)=1$. As shown in~\cite{ErnstSieben}, each structure class contains positions of both odd and even paritities. Note that $\type(X_G)=(\pty(|G|),0,0)$ and $\type(X_I)$ is an element of $\mathbb{T}:=\{0,1\} \times \mathbb{N} \times \mathbb{N}$, where $\mathbb{N}$ is the collection of nonnegative integers.
Additionally, the second component of $\type(X_{\Phi(G)})$ is the nim-value of $\gen(G)$, since the starting position $\emptyset$ is in $X_{\Phi(G)}$. We say that the \emph{parity of the structure class} $X_I$ is the parity of $|I|$. The sets of even and odd structure classes are denoted by $\mathcal{E}$ and $\mathcal{O}$, respectively.  Thus, $\mathcal{D}=\mathcal{E} \dot\cup \mathcal{O}$.

Let $\pi_i:\mathbb{N}^n\to\mathbb{N}$ and $\tilde\pi_i:\mathbb{N}^n\to\mathbb{N}^{n-1}$ denote projection functions defined by 
\begin{align*}
\pi_i(x_1,\ldots,x_n) & :=x_i, \\
\tilde\pi_i(x_1,\ldots,x_n) & :=(x_1,\ldots,x_{i-1},x_{i+1},\ldots,x_n).
\end{align*}
We use the standard image notation $f(A):=\{f(a)\mid a\in A\}$ if $A$ is a subset of the domain of $f$.

\begin{definition}
For $T \subseteq \mathbb{T}$, define $E_T:=\pi_2(T)$, $O_T:=\pi_3(T)$, $e_T:=\mex(O_T)$, and $o_T:=\mex(E_T)$. We also define
\begin{align*}
\mex_0(T) &:=(0,e_T,\mex(E_T\cup\{e_T\})) \\
\mex_1(T) &:=(1,\mex(O_T\cup\{o_T\}),o_T).
\end{align*} 
We refer to this computation as \emph{type calculus}.
\end{definition}

The following consequence of \cite[Corollary~4.3, Proposition~4.4]{ErnstSieben} is our main tool to compute nim-values.

\begin{proposition}\label{prop:TypeCalculation} 
If $X_I\in\mathcal{D}$, then
\[
\type(X_I)=
\begin{cases}
\mex_0(\type(\opt(X_I)), & \text{$|I|$ is even} \\
\mex_1(\type(\opt(X_I)), &\text{$|I|$ is odd}. \\
\end{cases}
\]
\end{proposition}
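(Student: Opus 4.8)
The plan is to compute the two nim-value components of $\type(X_I)$ directly from the mex recursion, exploiting the position of the subgroup $I=\lceil P\rceil$ relative to the positions making up $X_I$. Since \cite[Proposition~4.4]{ErnstSieben} already guarantees that the type is well defined, it suffices to identify the common nim-value of the even positions and the common nim-value of the odd positions of $X_I$ in terms of the option types $T:=\type(\opt(X_I))$. I would phrase everything through two structural observations and then turn the crank on the recursion.

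First I would record the containment and closure facts: every $P\in X_I$ satisfies $P\subseteq\lceil P\rceil=I$, and $I$ is closed in the sense that $\lceil I\rceil=I$. Combined with the monotonicity of the operator $P\mapsto\lceil P\rceil$, this splits the options of a position $P\in X_I$ into two kinds. Adding an element $g\in I\setminus P$ yields an option $P\cup\{g\}$ with $\lceil P\cup\{g\}\rceil=I$, that is, an \emph{intra-class} option that stays in $X_I$ but flips the parity of the position. Adding an element $g\in G\setminus I$ instead yields an option in a strictly larger class $X_J$ with $J\supsetneq I$, and by \cite[Corollary~4.3]{ErnstSieben} every such $X_J$ lies in $\opt(X_I)$. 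The only position of $X_I$ admitting no intra-class option is the top position $P=I$ itself, whose parity is $\pty(|I|)$.

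With this in hand the computation is bookkeeping with $\mex$. Suppose $|I|$ is even and let $a$ and $b$ denote the even and odd nim-values of $X_I$. The options of an even position are odd, and their nim-values are the third components $\pi_3$ of the classes reached; together with the intra-class value $b$ these exhaust $O_T$, so $a=\mex(O_T)=e_T$. Dually, the options of an odd position are even, with nim-values given by the second components $\pi_2$ of the classes reached; together with the intra-class value $a=e_T$ these exhaust $E_T\cup\{e_T\}$, so $b=\mex(E_T\cup\{e_T\})$. This is exactly $\mex_0(T)$. The case $|I|$ odd is the mirror image: the top position now contributes the odd slot $o_T=\mex(E_T)$, while the even positions pick up the intra-class odd value $o_T$, giving second component $\mex(O_T\cup\{o_T\})$, which is $\mex_1(T)$.

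The step I expect to be the main obstacle is reconciling the top position $I$, which has no intra-class option, with the proper positions of $X_I$, which do; these must share a nim-value by \cite[Proposition~4.4]{ErnstSieben}, yet they see syntactically different option sets. The resolution is the inequality $a\neq b$: because $b=\mex(E_T\cup\{e_T\})$ is by construction different from $e_T=a$, deleting the intra-class contribution $b$ from $O_T$ (resp.\ $a$ from $E_T$) does not change the relevant $\mex$, so the top position and the proper positions agree and the formula is insensitive to whether the self-loop $X_I\in\opt(X_I)$ is present. Verifying this robustness carefully—and confirming via \cite[Corollary~4.3]{ErnstSieben} that the classes reached from a representative really do realize all of $O_T$ and $E_T$—is where the genuine care is needed; the remaining manipulations are routine.
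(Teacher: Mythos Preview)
Your argument is correct and matches the paper's approach exactly: the paper simply records the proposition as a consequence of \cite[Corollary~4.3, Proposition~4.4]{ErnstSieben} without writing out a proof, and you have unpacked precisely those two ingredients. One small point of order: when $I$ is the \emph{only} even-sized position in $X_I$ (which happens, e.g., for $|I|=2$ with $I\neq\Phi(G)$), you cannot first read off $a=e_T$ from a proper even position and then invoke $a\neq b$ to handle $I$; instead compute $a=\nim(I)$ directly, then $b$ from any odd position, observe $b\neq a$ from $b=\mex(E_T'\cup\{a\})$, and only then conclude $e_T=\mex(O_T'\cup\{b\})=a$---but you clearly have all the pieces.
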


\begin{exam}
Let $I$ have odd order and $X_I$ have options with types $(0,1,2)$ and $(1,4,3)$. Then $E_T=\{1,4\}$ and $O_T=\{2,3\}$. So 
$$
\type(X_I)=\mex_1(\{(0,1,2),(1,4,3)\})=(1,1,0)
$$ 
since the odd positions in $X_I$ have nim-value $o_T=\mex(\{1,4\})=0$, while the even positions in $X_I$ have nim-value $\mex(\{2,3,o_T\})=1$. 
\end{exam}

The \emph{deficiency} of a subset $P$ of $G$ is the minimum size $\delta_G(P)$ of a subset $Q$ of $G$ such that $\langle P\cup Q \rangle=G$. Structure equivalent positions have equal deficiencies \cite[Proposition 3.2]{BeneshErnstSiebenGeneralizedDihedral}. We define 
\begin{align*}
&\mathcal{D}_k:=\{X_I\in \mathcal{D}\mid \delta_G(I)=k\},
&&\mathcal{E}_k:=\mathcal{E}\cap \mathcal{D}_k,
&&\mathcal{O}_k:=\mathcal{O}\cap \mathcal{D}_k,
\\
&\mathcal{D}_{\geq k}:=\bigcup\{\mathcal{D}_i \mid i \geq k\},
&&\mathcal{E}_{\geq k}:= \mathcal{E} \cap\mathcal{D}_{\geq k},
&&\mathcal{O}_{\geq k}:= \mathcal{O} \cap\mathcal{D}_{\geq k}.
\end{align*}
We write $\mathcal{D}_k(G)$ when we want to emphasize the dependence on $G$. We recursively define
\begin{align*}
\mathcal{D}_{k,0} & :=\{X_I\in \mathcal{D}_k \mid \opt(X_I) \subseteq \mathcal{D}_{k-1} \} \\
\mathcal{D}_{k,l} & :=\{X_I\in \mathcal{D}_k \mid \opt(X_I) \subseteq \mathcal{D}_{k-1} \cup \mathcal{D}_{k, l-1} \}
\end{align*}
for $k,l\ge 1$. It is easy to check that the union of the nested collection $\mathcal{D}_{k,0}\subseteq \mathcal{D}_{k,1}\subseteq \mathcal{D}_{k,2}\subseteq \cdots$ is $\mathcal{D}_k$.

We visualize the structure digraph of $\gen(G)$ with a structure diagram. In a \emph{structure diagram}, vertices are denoted by triangles or circles. A structure class with even or odd parity is represented by a triangle with a flat bottom or flat top, respectively. A structure class with an unknown or unimportant parity is represented by a circle. We use several arrow types to indicate whether a change in deficiency occurs between a structure class and its option. A summary of these symbols is shown in Figure~\ref{fig:sds}. Note that Proposition~\ref{prop:Rule0EverythingHasALowerDeficiencyOption} justifies that no other arrow types are necessary.

\begin{figure}[h]
\includegraphics{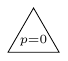}
\includegraphics{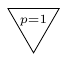}
\includegraphics{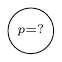}
\qquad
\includegraphics{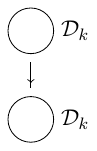}  
\includegraphics{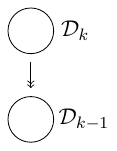}
\includegraphics{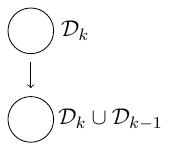}    
\caption{
\label{fig:sds}
Structure diagram symbols with $p$ denoting the parity of the structure class. The three different arrow types indicate whether the deficiency is unchanged, reduced by 1, or unspecified, respectively.}
\end{figure}

\subsection{Extended type calculus}

A further complication is that some of our restrictions require information about the even options of $X_I$, so we need to include this information in our type calculus. This motivates the following.

\begin{definition}
For $X_{I}\in\mathcal{D}_{k}$, the \emph{smoothness} of $X_I$ is
\[
\smo(X_I)=\begin{cases}
2 & \text{if } \pty(X_I)=0\\
1 & \text{if } \pty(X_I)=1 \text{ and } \opt(X_{I})\cap\mathcal{E}_{k}\neq\emptyset\\
0 & \text{otherwise}.
\end{cases}
\]
We say that $X_I$ is \emph{smooth} if $\smo(X_I)\ge 1$ and \emph{rough} otherwise.
\end{definition}

Note that an even structure class is always smooth, while the smoothness of an odd structure class depends on whether it has an even option with the same deficiency. The smoothness of an even structure class plays no role in our computations. We only define it to make the extended type in the next definition always a quadruple. This simplifies our formulas.

\begin{definition}
The \emph{extended type} of $X_I$ is $\etype(X_{I}):=(\type(X_I),\smo(X_I))$.
\end{definition}

Note that $\etype(X_I)$ is an element of $\mathbb{E}:=\mathbb{T} \times \{0,1,2\}$, although we will typically write extended types flattened as a quadruple $(p,e,o,s)$.

In an \emph{extended structure diagram}, we also indicate the smoothness of the structure classes. Smooth odd structure classes are drawn with
a double solid boundary while rough odd structure classes are drawn with a single dotted boundary. A summary of these symbols is shown in Figure~\ref{esds}.

\begin{figure}[h]
\includegraphics{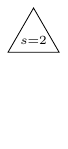} \qquad
\includegraphics{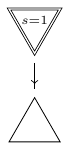}
\includegraphics{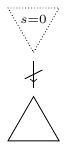}
\includegraphics{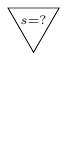}
\caption{
\label{esds}
Extended structure diagram symbols for structure classes. For odd structure classes, we use a double solid boundary if $X_I$ is smooth ($s=1$), a single dotted boundary if $X_I$ is rough ($s=0$), and single solid boundary if the smoothness is unknown or unimportant.}
\end{figure}

\begin{definition}
For $(A,B)\in\mathcal{P}(\mathbb{E})\times\mathcal{P}(\mathbb{E})$ we define 
\begin{align*}
\emex_0(A,B) &:= (\mex_0(\tilde\pi_4(A \cup B)),2), \\
\emex_1(A,B) &:= (\mex_1(\tilde\pi_4(A \cup B)),1-\min(\pi_1(A))).
\end{align*}
We refer to this computation as \emph{extended type calculus}.
\end{definition}

We think of these two functions as ways of finding the extended type of $X_I \in \mathcal{D}_n$, either real or hypothetical.  The first input $A$ consists of the extended types of the options of $X_I$  in $\mathcal{D}_n$, while the second input $B$ consists of the extended types of the options of $X_I$ in $\mathcal{D}_{n-1}$.

Extended type calculus allows us to recursively compute the extended types of every structure class, starting from the terminal structure class. 

\begin{exam}
Figure~\ref{fig:Z6} depicts the extended structure diagram for $\gen(\mathbb{Z}_6)$. The maximal subgroups are $\langle 2 \rangle$ and $\langle 3 \rangle$. The structure classes are $X_{\langle 1 \rangle}\in\mathcal{E}_0$, $X_{\langle 3 \rangle}\in\mathcal{E}_1$, and $X_{\langle 2 \rangle},X_{\langle 0 \rangle}\in\mathcal{O}_1$. 
Note that 
$\mathcal{D}_{1,0}=\{X_{\langle 3 \rangle},X_{\langle 2 \rangle}\}$ and $\mathcal{D}_{1,1}=\{X_{\langle 3 \rangle},X_{\langle 2 \rangle},X_{\langle 0 \rangle}\}$.  Extended type calculus can be used, for example, to compute
\begin{align*}
\etype(X_{\langle 0 \rangle}) 
&=\emex_1(\etype(\{X_{\langle 2 \rangle},X_{\langle 3 \rangle}\}),\etype(\{X_{\langle 1 \rangle}\})) \\
&=\emex_1(\{(1,2,1,0),(0,1,2,2)\},\{(0,0,0,2)\}) \\ &=(1,4,3,1).
\end{align*} The structure class $X_{\langle 0 \rangle}$ is smooth while $X_{\langle 2 \rangle}$ is rough.
The nim-value of the game is 
\[
\nim(\gen(\mathbb{Z}_6))=\nim(\emptyset)=\pi_2(\etype(X_{\lceil \emptyset \rceil}))=\pi_2(\etype(X_{\langle 0 \rangle}))=\pi_2(1,4,3,1)=4.
\]
\end{exam}

\begin{figure}[h]
\includegraphics[scale=1]{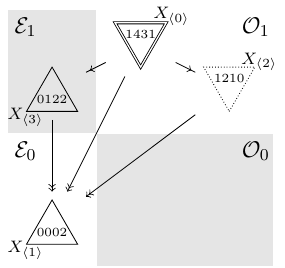}
\caption{\label{fig:Z6}Extended structure diagram for $\gen(\mathbb{Z}_6)$. The quadruples insides the triangles are the corresponding extended types.}
\end{figure}

\subsection{Some known option-type restrictions}

The following three results follow from~\cite[Proposition~3.8]{BeneshErnstSiebenGeneralizedDihedral}, Lagrange's Theorem, and~\cite[Proposition~3.9]{BeneshErnstSiebenGeneralizedDihedral}, respectively.  

\begin{proposition}\label{prop:Rule0EverythingHasALowerDeficiencyOption}
If $X_{I}\in\mathcal{D}_{k}$ for some $k\ge1$, then $\opt(X_{I})\subseteq\mathcal{D}_{k-1}\cup \mathcal{D}_{k}$
and $\opt(X_{I})\cap\mathcal{D}_{k-1}\ne\emptyset$.
\end{proposition}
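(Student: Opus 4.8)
The plan is to work with representative positions and to show that adjoining a single element to a position can lower its deficiency by at most one and can never raise it; the existence of a strictly decreasing option will then follow by choosing a minimal generating supplement carefully. Throughout, I would fix a position $P\in X_I$, so that $\delta_G(P)=\delta_G(I)=k$ by the structure-invariance of deficiency, and recall that every option of $X_I$ is represented by some $Q=P\cup\{g\}$ with $g\in G\setminus P$.

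For the first assertion I would establish the two-sided bound $k-1\le\delta_G(Q)\le k$ for every such option $Q$, which is exactly a subadditivity statement for deficiency. The upper bound is immediate: if $R$ is a supplement of $P$ with $\langle P\cup R\rangle=G$ and $|R|=k$, then $\langle Q\cup R\rangle\supseteq\langle P\cup R\rangle=G$, so $\delta_G(Q)\le k$. For the lower bound I would argue in the reverse direction: if $R'$ is a minimum supplement of $Q$, so that $|R'|=\delta_G(Q)$ and $\langle Q\cup R'\rangle=G$, then $\{g\}\cup R'$ is a supplement of $P$, whence $k=\delta_G(P)\le\delta_G(Q)+1$. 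Together these give $\delta_G(Q)\in\{k-1,k\}$, which is precisely $\opt(X_I)\subseteq\mathcal{D}_{k-1}\cup\mathcal{D}_k$.

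For the second assertion I would exhibit a specific option of deficiency exactly $k-1$. Choose a minimum supplement $R$ of $P$, so $|R|=k\ge 1$. I would first observe that $R$ may be taken disjoint from $P$: if some $r\in R$ lay in $P$, then $R\setminus\{r\}$ would still supplement $P$, contradicting minimality. Hence $R$ is nonempty and disjoint from $P$, so any $g\in R$ gives a genuine option $Q=P\cup\{g\}$. Since $\langle Q\cup(R\setminus\{g\})\rangle=\langle P\cup R\rangle=G$, we get $\delta_G(Q)\le k-1$, and combining this with the lower bound from the previous paragraph forces $\delta_G(Q)=k-1$. Thus $X_{\lceil Q\rceil}\in\opt(X_I)\cap\mathcal{D}_{k-1}$, so this set is nonempty.

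I do not expect a serious obstacle here, since the entire argument is a short deficiency computation. The only point that genuinely requires care is the disjointness reduction in the last paragraph: it is what guarantees that the element $g$ we adjoin is a legal move, i.e. $g\in G\setminus P$, rather than an element already present in $P$. Everything else is routine bookkeeping that transfers from positions to structure classes via the well-definedness of deficiency and of the option relation on $\mathcal{D}$.
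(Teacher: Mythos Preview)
Your argument is correct. The paper does not supply a proof of this proposition; it simply cites \cite[Proposition~3.8]{BeneshErnstSiebenGeneralizedDihedral}, so your self-contained deficiency computation---the subadditivity bound $\delta_G(P)-1\le\delta_G(P\cup\{g\})\le\delta_G(P)$ together with the observation that a minimum supplement must be disjoint from $P$---is exactly the kind of direct argument one would expect behind that citation.
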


The previous statement is depicted in Figure~\ref{fig:sds}. It essentially restricts the possible arrow types between structure classes.

\begin{proposition}\label{prop:Rule1EvenOptionsOnlyHaveEvenOptions}
If $X_{I}\in\mathcal{E}$ then $\opt(X_{I})\subseteq\mathcal{E}$.
\end{proposition}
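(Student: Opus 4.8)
The plan is to reduce the claim to Lagrange's Theorem by exploiting two structural properties of the closure operator $\lceil \cdot \rceil$: it always produces a subgroup of $G$, and it is inclusion-preserving. First I would unwind the definition of an option. If $X_J \in \opt(X_I)$, then by definition there exist positions $P \in X_I$ and $Q \in X_J$ with $Q \in \opt(P)$, so $Q = P \cup \{g\}$ for some $g \in G \setminus P$; in particular $P \subseteq Q$.

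Next I would record the two facts that do the real work. Since $\lceil R \rceil = \bigcap\{M \in \mathcal{M} \mid R \subseteq M\}$ is an intersection of subgroups of $G$, it is itself a subgroup of $G$ (with the convention $\lceil R \rceil = G$ when $R$ generates $G$, the intersection being over the empty family). Moreover $\lceil \cdot \rceil$ is monotone: from $P \subseteq Q$ we get the reverse containment of index sets $\{M \mid Q \subseteq M\} \subseteq \{M \mid P \subseteq M\}$, and intersecting over a smaller family yields a larger subgroup, so $I = \lceil P \rceil \subseteq \lceil Q \rceil = J$. Thus $I$ is a subgroup of the subgroup $J$.

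The final step is the appeal to Lagrange's Theorem: since $I \le J \le G$, the order $|I|$ divides $|J|$. If $X_I \in \mathcal{E}$, then $|I|$ is even, and any multiple of an even number is even, so $|J|$ is even and $X_J \in \mathcal{E}$. As $X_J$ was an arbitrary option, this gives $\opt(X_I) \subseteq \mathcal{E}$.

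I do not anticipate a genuine obstacle here, since the argument is short; the only point requiring a moment of care is that the statement is phrased at the level of structure classes rather than individual positions. I would therefore note that the inclusion $I \subseteq J$ established for the chosen representatives $P$ and $Q$ is in fact an inclusion between the class labels themselves, because $\lceil P \rceil = I$ and $\lceil Q \rceil = J$ for every choice of representatives. Hence the divisibility $|I| \mid |J|$ is a relation between the classes $X_I$ and $X_J$ and does not depend on the representatives used to witness the option relationship.
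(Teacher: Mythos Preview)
Your proof is correct and is essentially the same approach as the paper, which simply attributes the result to Lagrange's Theorem without further elaboration. You have merely spelled out the routine details: the monotonicity of $\lceil\cdot\rceil$ gives $I\le J$ as subgroups, and then Lagrange forces $|J|$ to be even whenever $|I|$ is.
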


This means that an even structure class has only even options, as shown in Figure~\ref{rules}(a).

\begin{proposition}\label{prop:Rule2EverythingHasAnEvenOption}
If $G$ is a group of even order and $X_I$ has an option, then $X_I$ has an even option.
\end{proposition}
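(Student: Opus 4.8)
The plan is to exhibit an even option directly whenever one is cheap to produce, and otherwise to fall back on the fact that an even structure class has only even options. First I would invoke Cauchy's theorem: since $|G|$ is even, $G$ contains an involution $t$, i.e.\ an element of order $2$. The key observation is that whenever a position $Q$ contains an involution, the subgroup $\langle Q\rangle$ has even order by Lagrange's theorem, and since $\langle Q\rangle\subseteq\lceil Q\rceil$, the closure $\lceil Q\rceil$ also has even order; hence $X_{\lceil Q\rceil}\in\mathcal{E}$. This reduces the problem to arranging for $t$ to lie in some option of $X_I$.

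I would then split on the parity of $X_I$. Suppose first that $|I|$ is odd. Fix any $P\in X_I$; since $P\subseteq\lceil P\rceil=I$ and $2\nmid|I|$, the involution $t$ cannot lie in $I$, and in particular $t\notin P$. Therefore $P\cup\{t\}\in\opt(P)$ is a genuine move, and by the observation above $\lceil P,t\rceil$ has even order. The structure class $X_{\lceil P,t\rceil}$ is thus an even option of $X_I$, as required.

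Now suppose $|I|$ is even. Then $X_I\in\mathcal{E}$, so Proposition~\ref{prop:Rule1EvenOptionsOnlyHaveEvenOptions} guarantees that every option of $X_I$ is even. Since by hypothesis $X_I$ has at least one option, that option is automatically even, and we are done.

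I do not anticipate a serious obstacle; the only points requiring care are (i) confirming that the adjoined element $t$ is genuinely new, which is precisely why the odd case argues from $t\notin I\supseteq P$ rather than from $t\notin P$ for a single representative, and (ii) passing from ``the order of $\lceil P,t\rceil$ is divisible by $2$'' to ``the structure class is even,'' which is immediate from the definition of the parity of a structure class. The even case is essentially free once Proposition~\ref{prop:Rule1EvenOptionsOnlyHaveEvenOptions} is available, so the real content is the elementary construction in the odd case.
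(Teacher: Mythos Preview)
Your argument is correct. The paper does not supply its own proof of this proposition; it simply cites \cite[Proposition~3.9]{BeneshErnstSiebenGeneralizedDihedral}. Your self-contained argument via Cauchy's theorem (pick an involution $t$, adjoin it when $|I|$ is odd, and fall back on Proposition~\ref{prop:Rule1EvenOptionsOnlyHaveEvenOptions} when $|I|$ is even) is exactly the natural proof one would expect behind that citation, and indeed the paper uses the same ``adjoin an involution'' maneuver later in the proofs of Propositions~\ref{prop:ForcedEvenOptions1} and~\ref{prop:GeneralizedEvenNonContainment}. One small point you left implicit but should perhaps state: in the odd case, $|I|$ odd and $|G|$ even force $I\neq G$, so every $P\in X_I$ satisfies $\langle P\rangle\subseteq I\subsetneq G$ and is therefore nonterminal; this is what guarantees that $P\cup\{t\}$ is a legitimate option rather than an illegal move from a terminal position.
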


The previous statement is depicted in Figure~\ref{rules}(b).

\section{Groups of odd order}

The type of a structure class can be determined relatively easily if $G$ has odd order.
The following theorem is an extension of \cite[Theorem~4.7]{ErnstSieben} and has a proof that is very similar to the proof of~\cite[Proposition~3.10]{BeneshErnstSiebenGeneralizedDihedral}. Note that we implicitly use Proposition~\ref{prop:TypeCalculation} in the following proof, as well as throughout the rest of the paper.

\begin{proposition}
\label{prop:OrderGIsOdd}
If $G$ is a group of odd order, then 
\[
	   \type(X_I) = 
	     \begin{dcases}
	       (1,0,0), & X_I \in \mathcal{O}_0\\
	       (1,2,1), & X_I \in \mathcal{O}_1\\
	       (1,2,0), & X_I \in \mathcal{O}_2\\
	       (1,1,0), & X_I \in \mathcal{O}_{\geq 3}.
	     \end{dcases}
\]
\end{proposition}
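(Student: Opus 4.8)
The plan is to compute all types by a double induction: an outer induction on the deficiency $k$ and, within each $k$, an inner induction along the filtration $\mathcal{D}_{k,0}\subseteq\mathcal{D}_{k,1}\subseteq\cdots$ whose union is $\mathcal{D}_k$. The first thing I would record is the simplification special to odd order: each $I=\lceil P\rceil$ is an intersection of maximal subgroups, hence a subgroup of $G$, so $|I|$ is odd by Lagrange's Theorem. Thus $\pty(|I|)=1$ for every structure class; equivalently $\mathcal{E}=\emptyset$, $\mathcal{D}=\mathcal{O}$, and $\mathcal{O}_k=\mathcal{D}_k$. This already gives the first coordinate $1$ in every case, and it means the recursion of Proposition~\ref{prop:TypeCalculation} only ever invokes $\mex_1$, since no even structure class is ever encountered as an option.

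For the base case $k=0$, the only subgroup of deficiency $0$ is $G$ itself, so $\mathcal{D}_0=\{X_G\}$ and $\type(X_G)=(\pty(|G|),0,0)=(1,0,0)$, as recorded in the preliminaries. For the outer inductive step, fix $k\ge 1$ and assume the claim for all smaller deficiencies. By Proposition~\ref{prop:Rule0EverythingHasALowerDeficiencyOption} every $X_I\in\mathcal{D}_k$ satisfies $\opt(X_I)\subseteq\mathcal{D}_{k-1}\cup\mathcal{D}_k$ with $\opt(X_I)\cap\mathcal{D}_{k-1}\ne\emptyset$. I then run the inner induction on $l$: for $X_I\in\mathcal{D}_{k,l}$ the options lie in $\mathcal{D}_{k-1}\cup\mathcal{D}_{k,l-1}$, so by the two inductive hypotheses every option type is already one of the four claimed values, and moreover at least one option carries the strictly lower deficiency type of $\mathcal{D}_{k-1}$. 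In each of the regimes $k=1,2$ and $k\ge 3$ this pins the set $T$ of option types down to a subset of a two-element set that is guaranteed to contain the lower-deficiency type, and a direct $\mex_1$ computation then returns the claimed value regardless of which same-deficiency options actually occur. For instance, when $k\ge 4$ the only possible option type is $(1,1,0)$ and $\mex_1(\{(1,1,0)\})=(1,1,0)$; when $k=3$ the options have types among $(1,2,0)$ and $(1,1,0)$ with $(1,2,0)$ present, and one checks $\mex_1$ still yields $(1,1,0)$. The cases $k=1$ and $k=2$ are analogous and produce $(1,2,1)$ and $(1,2,0)$.

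The main obstacle is not any single computation but the presence of options lying in the same deficiency class as $X_I$, which blocks a naive ``options have strictly smaller deficiency'' induction; this is exactly what the filtration $\mathcal{D}_{k,l}$ is designed to circumvent, and it is why the inner induction on $l$ is needed. The only point requiring a little care is to verify, in each regime, that the $\mex_1$ output is independent of whether a same-deficiency option is actually present, so that the value is well-defined across all of $\mathcal{D}_k=\bigcup_l\mathcal{D}_{k,l}$. The regime $k\ge 3$ is then self-sustaining: options of type $(1,1,0)$ from $\mathcal{D}_k$ together with the lower-deficiency type ($(1,2,0)$ when $k=3$, otherwise $(1,1,0)$) all feed back to $(1,1,0)$ under $\mex_1$, which is precisely why the type stabilizes on $\mathcal{O}_{\ge 3}$.
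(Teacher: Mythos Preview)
Your proposal is correct and follows essentially the same approach as the paper: both use structural induction on the structure classes, observe via Lagrange that all classes are odd so only $\mex_1$ is needed, and then case-split on the deficiency $k$, verifying in each case that the $\mex_1$ computation is insensitive to whether same-deficiency options are present. The only difference is cosmetic: you make the inner induction along the filtration $\mathcal{D}_{k,l}$ explicit, whereas the paper simply invokes ``structural induction'' and handles the same-deficiency options with a bare ``by induction.''
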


\begin{proof}
We will use structural induction on the structure classes.  By Proposition~\ref{prop:Rule0EverythingHasALowerDeficiencyOption} and Lagrange's Theorem, $X_I\in\mathcal{O}_m$ for $m\ge 1$ implies $\opt(X_I)\subseteq\mathcal{O}_{m}\cup\mathcal{O}_{m-1}$ and $\mathcal{O}_{m-1}\cap\opt(X_I)\ne\emptyset$.

If $X_I \in \mathcal{O}_0$, then $\type(X_I)=(1,0,0)$ since  $I=G$.
If $X_I \in \mathcal{O}_1$, then $\type(X_I)=(1,2,1)$ since
\[
\type(\opt(X_I))
=
\begin{dcases}
\{(1,0,0)\} & \text{if } \opt(X_I)\subseteq\mathcal{O}_0 \\
\{(1,0,0),(1,2,1)\} & \text{otherwise}
\end{dcases}
\]
by induction. If $X_I \in \mathcal{O}_2$, then $\type(X_I)=(1,2,0)$ since
\[
\type(\opt(X_I))
=
\begin{dcases}
\{(1,2,1)\} & \text{if }  \opt(X_I)\subseteq\mathcal{O}_1 \\
\{(1,2,1),(1,2,0)\} & \text{otherwise}
\end{dcases}
\]
by induction. If $X_I  \in \mathcal{O}_{3}$, then $\type(X_I)=(1,1,0)$ since
\[
\type(\opt(X_I))
=
\begin{dcases}
\{(1,2,0)\} & \text{if } \opt(X_I)\subseteq\mathcal{O}_2 \\
\{(1,2,0),(1,1,0)\} & \text{otherwise}
\end{dcases}
\]
by induction. If $X_I \in \mathcal{O}_{\geq 4}$, then $\type(X_I)=(1,1,0)$, since every option of $X_I$ has type $(1,1,0)$ by induction.
\end{proof}

\section{Groups of even order}

Our main goal in this section is to compute the possible nim-values of $\gen(G)$ for a group $G$ of even order. Our approach is similar to that of Proposition~\ref{prop:OrderGIsOdd}. We want to recursively build all possible types of structure classes with a given deficiency from the already-computed types with lower deficiency.
Unfortunately this simple approach is not sufficient to complete this computation, because it quickly becomes unwieldy for groups of even order as it yields an infinite number of potential types.  However, we can use group theory to impose restrictions on the type calculations, which will reduce the number of potential types by eliminating many types that are not possible. We already have three of these restrictions: Propositions~\ref{prop:Rule0EverythingHasALowerDeficiencyOption}, \ref{prop:Rule1EvenOptionsOnlyHaveEvenOptions}, and \ref{prop:Rule2EverythingHasAnEvenOption}. In this section, we develop additional restrictions involving smoothness, which is the reason why we introduced extended types. We then use these restrictions to carry out the computation on extended types using the algorithm in Subsection~\ref{subsec:algorithm}.

\subsection{Additional option-type restrictions}

In this subsection we present two option type restrictions that involve smoothness. A diagrammatic depiction of the statements are shown in Figures~\ref{rules}(c) and \ref{rules}(d), respectively.

\begin{proposition}\label{prop:ForcedEvenOptions1}
Let $X_I, X_J \in \mathcal{O}_n$ such that $X_J$ is an option of $X_I$.  
If $X_J$ is smooth, then so is $X_I$.
\end{proposition}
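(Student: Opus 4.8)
The plan is to work entirely with the subgroups underlying the structure classes and to construct, by hand, an even option of $X_I$ lying in $\mathcal{E}_n$; by definition this forces $\smo(X_I)\ge 1$, i.e.\ $X_I$ is smooth. Recall that $I=\lceil P\rceil$ is an intersection of maximal subgroups, hence a genuine subgroup of $G$, and $X_I\in\mathcal{O}_n$ means $|I|$ is odd with $\delta_G(I)=n$. Since $X_J$ is an option of $X_I$, we have $J=\lceil I,g\rceil$ for some $g$, and therefore $I\subseteq J$. Because $X_J$ is smooth and odd, it has an even option $X_K\in\mathcal{E}_n$; writing $K=\lceil J,h\rceil$ gives $J\subseteq K$, with $|K|$ even and $\delta_G(K)=n$. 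Combining these, $I\subseteq J\subseteq K$ where $K$ is even of deficiency $n$.

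First I would extract an involution: since $|K|$ is even, Cauchy's theorem provides $t\in K$ of order $2$. As $|I|$ is odd, $t\notin I$, so $X_{\lceil I,t\rceil}$ is a genuine option of $X_I$ (take the position $I\in X_I$ and adjoin $t$). Moreover $\lceil I,t\rceil$ contains the order-$2$ element $t$, so by Lagrange $|\lceil I,t\rceil|$ is even, i.e.\ $X_{\lceil I,t\rceil}\in\mathcal{E}$. It then remains only to check that this even option has deficiency exactly $n$.

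The hard part is pinning the deficiency to $n$ rather than to $n-1$, since Proposition~\ref{prop:Rule0EverythingHasALowerDeficiencyOption} allows both a priori. I would settle this by a sandwich argument resting on two monotonicity facts. First, $\lceil\cdot\rceil$ is order-preserving, so from $I\cup\{t\}\subseteq K$ and the idempotence $\lceil K\rceil=K$ we obtain $\lceil I,t\rceil\subseteq K$, giving the chain $I\subseteq\lceil I,t\rceil\subseteq K$. Second, $\delta_G$ is order-reversing: if $A\subseteq B$ then any $Q$ with $\langle A\cup Q\rangle=G$ also satisfies $\langle B\cup Q\rangle=G$, whence $\delta_G(B)\le\delta_G(A)$. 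Applying this to the chain yields $n=\delta_G(K)\le\delta_G(\lceil I,t\rceil)\le\delta_G(I)=n$, so $\delta_G(\lceil I,t\rceil)=n$. Thus $X_{\lceil I,t\rceil}\in\mathcal{E}_n\cap\opt(X_I)$, meaning $X_I$ has an even option of the same deficiency and is therefore smooth. The only point worth double-checking against the paper's conventions is the order-reversal of $\delta_G$, which is either the one-line argument above or follows from the deficiency facts of~\cite{BeneshErnstSiebenGeneralizedDihedral}; everything else is a direct application of Cauchy's and Lagrange's theorems.
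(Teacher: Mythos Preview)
Your proof is correct and follows essentially the same approach as the paper: pick an involution $t$ in the even option $K=\lceil J,h\rceil\in\mathcal{E}_n$ via Cauchy, form the option $X_{\lceil I,t\rceil}$ of $X_I$, and sandwich $I\subseteq\lceil I,t\rceil\subseteq K$ to pin the deficiency to $n$. The paper's chain $I\le\lceil I,t\rceil\le\lceil J,t\rceil\le\lceil J,g\rceil$ includes one extra intermediate term, but the argument is otherwise identical.
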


\begin{proof}
Suppose that $X_J$ has an option in $\mathcal{E}_n$, as shown in Figure~\ref{proofFigs}(a).  Then there is a $g \in G$ such that $X_{\lceil J,g \rceil} \in \mathcal{E}_n$.  By Cauchy's Theorem, there is an element $t$ in $\lceil J,g \rceil$ of order $2$. Since $X_I\in\mathcal{O}_n$, $t\notin I$. Then $\lceil I,t\rceil$ has even order, so $X_I$ has an option $X_{\lceil I,t\rceil}$ in $\mathcal{E}$. Since $I \leq \lceil I,t \rceil \leq \lceil J,t \rceil \leq \lceil J,g \rceil$ with both $X_I$ and $X_{\lceil J,g \rceil}$ in $\mathcal{D}_n$, we conclude that $X_{\lceil I,t \rceil} \in \mathcal{E}_n$.   Thus, $X_I$ has an option $X_{\lceil I,t \rceil}$ in $\mathcal{E}_n$.
\end{proof}

\begin{figure}[h]
\begin{tabular}{c c}
\includegraphics{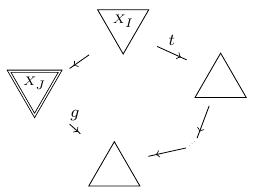}
&
\includegraphics{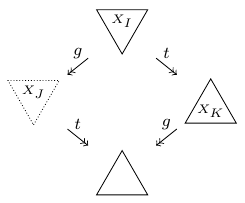}
\\
(a) & (b) 
\end{tabular}
\caption{
\label{proofFigs}
Figures for Propositions~\ref{prop:ForcedEvenOptions1} and \ref{prop:GeneralizedEvenNonContainment}.}
\end{figure}

\begin{proposition} 
\label{prop:GeneralizedEvenNonContainment}
Let $G$ be a group of even order and assume that 
$X_I \in \mathcal{O}_n$
and $X_J \in \mathcal{O}_{n-1}$ such that $X_J$ is an option of $X_I$.  
If $X_J$ is rough, then so is $X_I$.
\end{proposition}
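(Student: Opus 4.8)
The plan is to prove the contrapositive: assume $X_I$ is smooth and deduce that $X_J$ is smooth, which is logically equivalent to the stated implication ``if $X_J$ is rough then $X_I$ is rough.'' So suppose $X_I \in \mathcal{O}_n$ is smooth, meaning $X_I$ has an option in $\mathcal{E}_n$. By definition of smoothness there is some $g \in G$ with $X_{\lceil I,g\rceil} \in \mathcal{E}_n$, so $\lceil I,g\rceil$ has even order and lies at the same deficiency level $n$ as $I$. This is the symmetric counterpart of the situation in Proposition~\ref{prop:ForcedEvenOptions1}, and I expect the proof to follow the same template, using Cauchy's Theorem to extract an involution and then transporting an even option from $X_I$ down to $X_J$.

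The key steps, in order, would be as follows. First, apply Cauchy's Theorem to the even-order subgroup $\lceil I,g\rceil$ to obtain an element $t$ of order $2$. Since $X_J \in \mathcal{O}_{n-1}$ has odd order, $t \notin J$, so adjoining $t$ to $J$ strictly increases the order and in particular makes it even: thus $\lceil J,t\rceil$ has even order and $X_{\lceil J,t\rceil}$ is an option of $X_J$ lying in $\mathcal{E}$. Second, I need to control the deficiency of this new option so as to conclude it lands exactly in $\mathcal{E}_{n-1}$ rather than dropping lower. Because $X_J$ is an option of $X_I$ with $X_J \in \mathcal{D}_{n-1}$ and $X_I \in \mathcal{D}_n$, we have the containment $J \subseteq \lceil J,t\rceil$, and I would sandwich the relevant subgroups to pin down the deficiency, using that structure-equivalent positions have equal deficiencies and that deficiency drops by at most one along an option edge (Proposition~\ref{prop:Rule0EverythingHasALowerDeficiencyOption}).

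The main obstacle, and the step requiring the most care, is the deficiency bookkeeping in the second step. In Proposition~\ref{prop:ForcedEvenOptions1} the chain $I \le \lceil I,t\rceil \le \lceil J,t\rceil \le \lceil J,g\rceil$ with both endpoints at deficiency $n$ forced the intermediate class to deficiency $n$ by monotonicity of deficiency under inclusion. Here the analogous chain must be set up so that $X_{\lceil J,t\rceil}$ is forced to have deficiency exactly $n-1$. The element $t$ of order $2$ can be chosen inside $\lceil I,g\rceil$; since $\lceil I,g\rceil$ is an even option of $X_I$ at deficiency $n$, one expects $\lceil I,t\rceil$ itself to be an even class at deficiency $n$ by the argument of the previous proposition, and then one relates $\lceil J,t\rceil$ to $\lceil I,t\rceil$. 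The delicate point is establishing the correct inclusions $J \le \lceil J,t\rceil \le \lceil I,t\rceil$ (or the appropriate variant) together with the deficiency bounds $\delta_G(\lceil J,t\rceil) \le \delta_G(J) = n-1$ and $\delta_G(\lceil J,t\rceil) \ge \delta_G(\lceil I,t\rceil) - 1 \ge n-1$, which together pin the deficiency at $n-1$ and yield $X_{\lceil J,t\rceil} \in \mathcal{E}_{n-1}$, witnessing that $X_J$ is smooth.

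Once $X_{\lceil J,t\rceil} \in \mathcal{E}_{n-1}$ is established, smoothness of $X_J$ follows directly from the definition, since an odd class with an even option at its own deficiency level has smoothness $s \ge 1$. I would then remark that this closes the contrapositive and hence proves the proposition, with the diagrammatic content matching Figure~\ref{proofFigs}(b).
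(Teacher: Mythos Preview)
Your contrapositive strategy is valid and, once the bookkeeping is fixed, yields a proof at least as clean as the paper's. But note that the paper argues \emph{directly}: it assumes $X_J$ is rough, takes an arbitrary even option $X_L$ of $X_I$, picks an even-order $t\in L$, and uses roughness of $X_J$ to force $X_{\lceil J,t\rceil}\in\mathcal{E}_{n-2}$; writing $J=\lceil I,g\rceil$ it then observes $\lceil K,g\rceil=\lceil J,t\rceil$ (with $K=\lceil I,t\rceil$), so Proposition~\ref{prop:Rule0EverythingHasALowerDeficiencyOption} forces $X_K\in\mathcal{E}_{n-1}$, and since $K\subseteq L$ also $X_L\in\mathcal{E}_{n-1}$. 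So the paper's route runs through $\mathcal{E}_{n-2}$ and back up, whereas yours constructs the witness for smoothness of $X_J$ directly from a witness for smoothness of $X_I$.

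The one genuine slip in your sketch is the inclusion $\lceil J,t\rceil \le \lceil I,t\rceil$: since $X_J$ is an option of $X_I$ we have $I\subseteq J$, hence $\lceil I,t\rceil\subseteq\lceil J,t\rceil$, the reverse of what you wrote. That inclusion therefore cannot give the lower bound $\delta_G(\lceil J,t\rceil)\ge n-1$. The correct justification is the one the paper uses in the other direction: write $J=\lceil I,h\rceil$ for some $h$, so
\[
\lceil J,t\rceil=\lceil I,h,t\rceil=\lceil\,\lceil I,t\rceil,\,h\,\rceil,
\]
which exhibits $X_{\lceil J,t\rceil}$ as (equal to or) an option of $X_{\lceil I,t\rceil}$. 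You have already argued $\delta_G(\lceil I,t\rceil)=n$ via the sandwich $I\subseteq\lceil I,t\rceil\subseteq\lceil I,g\rceil$, so Proposition~\ref{prop:Rule0EverythingHasALowerDeficiencyOption} now gives $\delta_G(\lceil J,t\rceil)\ge n-1$; combined with $\delta_G(\lceil J,t\rceil)\le\delta_G(J)=n-1$ you are done. With this correction your contrapositive proof goes through.
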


\begin{proof}
Assume $X_L$ is an even option of $X_I$.
We will show that $X_L$ is in $\mathcal{E}_{n-1}$.
Since $L$ has even order, it contains an element $t$ of even order. Let $K:=\lceil I,t\rceil$,
as shown in Figure~\ref{proofFigs}(b).
Note that $X_K \in \mathcal{E}_n \cup \mathcal{E}_{n-1}$ by Proposition~\ref{prop:Rule0EverythingHasALowerDeficiencyOption} since $t$ has even order and $X_K$ is an option of $X_I$.
By Lagrange's Theorem, $X_{\lceil J,t\rceil} \in \mathcal{E}$. Since $X_J$ is rough, we have $X_{\lceil J,t \rceil} \not\in \mathcal{E}_{n-1}$.  Hence $X_{\lceil J,t \rceil} \in \mathcal{E}_{n-2}$ by Proposition~\ref{prop:Rule0EverythingHasALowerDeficiencyOption}. We have $J=\lceil I,g \rceil$ for some $g \in G$.    Since $X_{\lceil K,g \rceil} = X_{\lceil I,g,t \rceil} = X_{\lceil J,t \rceil} \in \mathcal{E}_{n-2}$, we conclude that $X_K \in \mathcal{E}_{n-1}$ by Proposition~\ref{prop:Rule0EverythingHasALowerDeficiencyOption}. 
Since $K$ is a subgroup of $L$, $X_L\in\mathcal{E}_{n-1}$, as well.
\end{proof}

\begin{figure}[h]
\begin{tabular}{c c c c}
\includegraphics{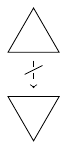} &
\includegraphics{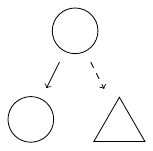} &
\includegraphics{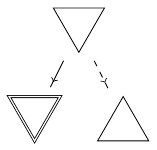} &
\includegraphics{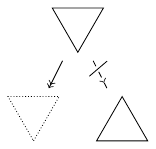} \\
(a) Proposition~\ref{prop:Rule1EvenOptionsOnlyHaveEvenOptions} & (b) Proposition~\ref{prop:Rule2EverythingHasAnEvenOption} & (c) Proposition~\ref{prop:ForcedEvenOptions1} & (d) Proposition~\ref{prop:GeneralizedEvenNonContainment}
\end{tabular}
\caption{
\label{rules}
Diagrams for option type restrictions. As with commutative diagrams, solid arrows are assumed to exist, indicating premises, while the dashed arrows are guaranteed to exist, indicating conclusions. A crossed-out dashed arrow is guaranteed not to exist.
}
\end{figure}

\subsection{Spectrum of extended types}
\label{subsec:algorithm}

The next definition introduces the spectrum of extended types for groups of even order.

\begin{definition}
For $k,l\ge 0$ we let $E_k:=\bigcup_l E_{k,l}$, where
\begin{align*}
E_{k,l} & :=\bigcup\{ \etype(\mathcal{D}_{k,l}(G)) \mid G \text{ is a group of even order}\},
\end{align*} 
so that $E_k =\bigcup\{ \etype(\mathcal{D}_k(G)) \mid G \text{ is a group of even order}\}$. We also define $E:=\bigcup_k E_k$ to be the spectrum of extended types of groups of even order. 
\end{definition}

Determining $E$ appears to be difficult, so we define the set of feasible extended types. The set of feasible extended types is easier to compute and turns out to be a superset of the spectrum of extended types. 
Recall that the nim-value of a game occurs as the second component of some extended type.
This larger set of feasible extended types does not introduce extraneous nim-values because Example~\ref{ex:nimExamplesParity} demonstrates that we can find examples of groups with each of the nim-values.

The next definition reformulates Propositions~\ref{prop:Rule0EverythingHasALowerDeficiencyOption} and \ref{prop:Rule1EvenOptionsOnlyHaveEvenOptions}
in the language of extended types. 

\begin{definition}
A pair $(A,B)$ in $\mathcal{P}(\mathbb{E})\times\mathcal{P}(\mathbb{E})$ is \emph{$0$-feasible} if $B\neq \emptyset$ and $1\not\in\pi_1(A\cup B)$.
\end{definition}

The four criteria in the following definition are reformulations of Propositions~\ref{prop:Rule0EverythingHasALowerDeficiencyOption},
\ref{prop:Rule2EverythingHasAnEvenOption}, \ref{prop:ForcedEvenOptions1}, and~\ref{prop:GeneralizedEvenNonContainment}, respectively. 

\begin{definition}
A pair $(A,B)$ in $\mathcal{P}(\mathbb{E})\times\mathcal{P}(\mathbb{E})$ is \emph{$1$-feasible} if it satisfies the following conditions:
\begin{enumerate}
\item $B\not =\emptyset$.
\item $0\in\pi_1( A\cup B )$.
\item $1\in\pi_4(A)$ implies $0\in\pi_1(A)$.
\item $0\in\pi_4(B)$ implies $0\not\in\pi_1(A)$.
\end{enumerate}
\end{definition}

We are ready to define our approximation to the $E_k$. 

\begin{definition}
\label{Ebardef}
We let $\bar{E}_{0} := \{(0,0,0,2)\}$. For $k,l\ge 1$, we recursively define
\begin{align*}
\bar{E}_{k,0} & :=\{\emex_p(\emptyset,B) \mid p\in\{0,1\}, B\in\mathcal{P}(\bar E_{k-1}), (\emptyset,B) \text{ is $p$-feasible} \},  \\
\bar{E}_{k,l} & :=\{\emex_p(A, B) \mid p\in\{0,1\}, (A,B)\in\mathcal{P}(\bar E_{k,l-1})\times\mathcal{P}(\bar E_{k-1}), (A,B) \text{ is $p$-feasible} \}, \\
\bar{E}_{k}   & :=\bigcup\{ \bar{E}_{k,l} \mid l\in\mathbb{N} \}.
\end{align*}
We also define $\bar E:=\bigcup_k \bar{E}_k$ to be the \emph{feasible spectrum} of extended types of groups of even order.
\end{definition}

The reason why we are distinguishing between $E_k$ and $\bar{E}_k$ is that $\bar{E}_k$ may contain extended types that cannot exist for an actual group.

\begin{proposition}
For $k\geq 1$, $\bar{E}_{k,0}\subseteq \bar{E}_{k,1}\subseteq \bar{E}_{k,2}\subseteq\cdots \subseteq \bar{E}_{k}$.
\end{proposition}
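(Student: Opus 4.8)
The plan is to prove the nested inclusions by induction on $l$, observing that the recursive definition of $\bar{E}_{k,l}$ differs from that of $\bar{E}_{k,l-1}$ only in that the first coordinate $A$ of the generating pair is now permitted to range over the larger collection $\mathcal{P}(\bar{E}_{k,l-1})$ rather than $\mathcal{P}(\bar{E}_{k,l-2})$; the operations $\emex_p$ and the $p$-feasibility conditions depend only on the pair $(A,B)$ itself and not on which stratum $A$ is drawn from. Since the final inclusion $\bar{E}_{k,l}\subseteq \bar{E}_k$ is immediate from the definition $\bar{E}_k=\bigcup_l \bar{E}_{k,l}$, it suffices to establish the chain $\bar{E}_{k,0}\subseteq \bar{E}_{k,1}\subseteq \bar{E}_{k,2}\subseteq\cdots$.

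For the base case, I would take any element $\emex_p(\emptyset,B)$ of $\bar{E}_{k,0}$, where $B\in\mathcal{P}(\bar{E}_{k-1})$ and $(\emptyset,B)$ is $p$-feasible. Because $\emptyset\in\mathcal{P}(\bar{E}_{k,0})$, the pair $(\emptyset,B)$ is an admissible choice in the definition of $\bar{E}_{k,1}$, and it is $p$-feasible by hypothesis; hence $\emex_p(\emptyset,B)\in \bar{E}_{k,1}$. This gives $\bar{E}_{k,0}\subseteq \bar{E}_{k,1}$.

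For the inductive step, I would assume $\bar{E}_{k,l-2}\subseteq \bar{E}_{k,l-1}$ and let $\emex_p(A,B)$ be an arbitrary element of $\bar{E}_{k,l-1}$, where $(A,B)\in\mathcal{P}(\bar{E}_{k,l-2})\times\mathcal{P}(\bar{E}_{k-1})$ is $p$-feasible. The inductive hypothesis yields $\mathcal{P}(\bar{E}_{k,l-2})\subseteq\mathcal{P}(\bar{E}_{k,l-1})$ by monotonicity of the power set, so $A\in\mathcal{P}(\bar{E}_{k,l-1})$. Thus the same pair $(A,B)$ is an admissible, $p$-feasible choice in the definition of $\bar{E}_{k,l}$, giving $\emex_p(A,B)\in \bar{E}_{k,l}$ and hence $\bar{E}_{k,l-1}\subseteq \bar{E}_{k,l}$.

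This argument is essentially formal, so I do not expect a genuine obstacle. The only point requiring care is the bookkeeping at the junction between the two defining formulas: one must check that $A=\emptyset$ is a legitimate first coordinate in the $\bar{E}_{k,1}$ recursion (it is, since $\emptyset$ lies in every power set), so that the base case correctly bridges the $\bar{E}_{k,0}$ formula, in which $A$ is fixed at $\emptyset$, to the general $\bar{E}_{k,l}$ formula. Everything else is transported verbatim along the power-set inclusion.
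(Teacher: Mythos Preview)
Your proposal is correct and follows essentially the same approach as the paper's proof: both argue by induction on $l$, handle the base case by noting that $\emptyset\in\mathcal{P}(\bar{E}_{k,0})$ so that the pair $(\emptyset,B)$ qualifies in the definition of $\bar{E}_{k,1}$, and then use the inductive hypothesis together with monotonicity of the power set to push $A$ into the next stratum. The only difference is a harmless index shift (you prove $\bar{E}_{k,l-1}\subseteq\bar{E}_{k,l}$ where the paper proves $\bar{E}_{k,l}\subseteq\bar{E}_{k,l+1}$).
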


\begin{proof}
We will prove that $\bar{E}_{k,l} \subseteq \bar{E}_{k,l+1}$ by induction on $l$, and it is clear that $\bar{E}_{k,l} \subseteq \bar{E}_k$ by definition of $\bar{E}_k$.

Let $t \in \bar{E}_{k,0}$.  Then $t=\emex_p(\emptyset,B)$ for some $p \in \{0,1\}$ and $B \in \mathcal{P}(\bar{E}_{k-1})$ such that $(\emptyset,B)$ is $p$-feasible.  Since $\emptyset \in \mathcal{P}(\bar{E}_{k,0})$, we have $t=\emex_p(\emptyset, B) \in \bar{E}_{k,1}$. 

Now suppose that $\bar{E}_{k,l-1} \subseteq \bar{E}_{k,l}$, and let $r \in \bar{E}_{k,l}$.  Then $r=\emex_p(A, B)$ for some $p \in \{0,1\}$, $A \in \mathcal{P}(\bar{E}_{k,l-1})$, and $B \in \mathcal{P}(\bar{E}_{k-1})$ such that $(A,B)$ is $p$-feasible.  Since $\bar{E}_{k,l-1} \subseteq \bar{E}_{k,l}$ by induction, we have $\mathcal{P}(\bar{E}_{k,l-1}) \subseteq \mathcal{P}(\bar{E}_{k,l})$ and so $A \in \mathcal{P}(\bar{E}_{k,l})$.  Then $r=\emex_p(A, B) \in \bar{E}_{k,l+1}$, and we conclude that $\bar{E}_{k,l} \subseteq \bar{E}_{k,l+1}$.
\end{proof}

The next result shows that every extended type that actually occurs in a group is a feasible extended type.  This is no surprise.  Both $E_k$ and $\bar E_k$ are recursively computed in the same way with the $\emex$ function, although the construction of the extended types that actually occur may have additional restrictions on the input than the construction of the feasible extended types.  Thus, the creation of $E_k$ is a possibly more restrictive process, so it must be a subset of $\bar E_k$.

\begin{proposition}
For all $k\in \mathbb{N}$, $E_{k} \subseteq \bar{E}_{k}$.
\end{proposition}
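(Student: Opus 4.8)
The plan is to prove $E_k \subseteq \bar{E}_k$ by induction on $k$, mirroring the parallel recursive constructions of the two families. The base case $k=0$ requires checking that $E_0 \subseteq \bar{E}_0 = \{(0,0,0,2)\}$; since $\mathcal{D}_0$ consists exactly of the terminal class $X_G$ (the only class of deficiency $0$), and since $G$ has even order so that $X_G \in \mathcal{E}_0$, we have $\type(X_G) = (0,0,0)$ and $\smo(X_G) = 2$, giving $\etype(X_G) = (0,0,0,2)$ as required.

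For the inductive step, I would fix $k \geq 1$, assume $E_{k-1} \subseteq \bar{E}_{k-1}$, and then run a \emph{nested} induction on $l$ to show $E_{k,l} \subseteq \bar{E}_{k,l}$ for all $l$; taking the union over $l$ then yields $E_k \subseteq \bar{E}_k$. So suppose $X_I \in \mathcal{D}_{k,l}(G)$ for some group $G$ of even order, and let $t := \etype(X_I)$. Let $p := \pty(X_I)$. Split the options of $X_I$ according to deficiency: by Proposition~\ref{prop:Rule0EverythingHasALowerDeficiencyOption}, $\opt(X_I) \subseteq \mathcal{D}_{k-1} \cup \mathcal{D}_k$, and by the definition of $\mathcal{D}_{k,l}$ the deficiency-$k$ options lie in $\mathcal{D}_{k,l-1}$. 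Set $A := \etype(\opt(X_I) \cap \mathcal{D}_k)$ and $B := \etype(\opt(X_I) \cap \mathcal{D}_{k-1})$. By the two induction hypotheses, $A \in \mathcal{P}(\bar{E}_{k,l-1})$ and $B \in \mathcal{P}(\bar{E}_{k-1})$.

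The heart of the argument is then to verify two things. First, that $(A,B)$ is $p$-feasible: here I would invoke the four propositions that the feasibility conditions reformulate. Proposition~\ref{prop:Rule0EverythingHasALowerDeficiencyOption} gives $B \neq \emptyset$; Proposition~\ref{prop:Rule1EvenOptionsOnlyHaveEvenOptions} (for $p=0$) forces $1 \notin \pi_1(A \cup B)$; Proposition~\ref{prop:Rule2EverythingHasAnEvenOption} gives $0 \in \pi_1(A \cup B)$ in the odd case; and Propositions~\ref{prop:ForcedEvenOptions1} and~\ref{prop:GeneralizedEvenNonContainment}, translated through the definition of $\smo$, supply conditions (3) and (4). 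Second, that $\etype(X_I) = \emex_p(A,B)$: this is where I would appeal to Proposition~\ref{prop:TypeCalculation} for the $\type$ component (noting $\tilde\pi_4(A \cup B)$ simply strips the smoothness coordinate off all options) and to the definition of $\smo$ for the fourth coordinate, checking that $\smo(X_I) = 2$ when $p=0$ and $\smo(X_I) = 1 - \min(\pi_1(A))$ when $p=1$ — the latter because an odd class is smooth exactly when it has an even option of the same deficiency, i.e.\ when $0 \in \pi_1(A)$. Concluding $t = \emex_p(A,B) \in \bar{E}_{k,l}$ closes the nested induction.

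The main obstacle I anticipate is not any single deep step but the bookkeeping of matching each of the four propositions precisely against its feasibility-condition reformulation, together with confirming that the $\smo$ formula embedded in $\emex_1$ correctly captures smoothness in terms of $\min(\pi_1(A))$ rather than $\pi_1(A \cup B)$ — that is, that smoothness depends only on same-deficiency even options. Care is also needed in the degenerate case where $A = \emptyset$ (all options drop deficiency), which is exactly the $\bar{E}_{k,0}$ clause; there $\min(\pi_1(\emptyset))$ must be read so that $\emex_1(\emptyset, B)$ returns smoothness $0$, matching the fact that an odd class with no same-deficiency option is rough.
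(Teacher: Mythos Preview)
Your proposal is correct and follows essentially the same approach as the paper: a double induction on $k$ and then on $l$, splitting the options of $X_I$ into same-deficiency and lower-deficiency parts, invoking Propositions~\ref{prop:Rule0EverythingHasALowerDeficiencyOption}--\ref{prop:GeneralizedEvenNonContainment} to establish $p$-feasibility, and concluding $\etype(X_I)=\emex_p(A,B)\in\bar E_{k,l}$. The paper phrases this as a minimal-counterexample argument rather than a direct induction, but the structure is identical; if anything, you are more explicit than the paper about verifying $\etype(X_I)=\emex_p(A,B)$ and about the $A=\emptyset$ edge case for $\min(\pi_1(\emptyset))$.
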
 
\begin{proof}
For a contradiction, assume there is a least $k$ such that $E_k \not\subseteq \bar E_k$.  Since $E_0  = \{(0,0,0,2)\}=\bar E_0$, we may assume that $k \geq 1$.  Then there must be a least $l$ such that $E_{k,l} \not\subseteq \bar E_{k,l}$, and let $t \in E_{k,l} \setminus \bar E_{k,l}$.  Since $t \in E_k$, there is a finite group $G$ of even order and structure class $X_I \in \mathcal{D}_{k,l}(G)$ such that $t=\etype(X_I)$.  Then $t = \emex_p(A,B)$ where $A:=\etype(\opt(X_I) \cap  \mathcal{D}_{k,l-1}(G))$, $B:=\etype(\opt(X_I) \cap \mathcal{D}_{k-1})$, and $p:=\pty(|I|)$. By Propositions~\ref{prop:Rule0EverythingHasALowerDeficiencyOption}, \ref{prop:Rule1EvenOptionsOnlyHaveEvenOptions}, \ref{prop:Rule2EverythingHasAnEvenOption}, \ref{prop:ForcedEvenOptions1}, and~\ref{prop:GeneralizedEvenNonContainment}, we have that $(A,B)$ is $p$-feasible.  Additionally, \[B \subseteq \etype(\mathcal{D}_{k-1}(G)) \subseteq E_{k-1} \subseteq \bar E_{k-1}, \]  by the choice of $k$.

If $l=0$ then $t=\emex_p(A,B)=\emex_p(\emptyset,B) \in \bar E_{k,0}$, a contradiction. Thus, we may assume that $l \geq 1$. Then \[A =\etype(\opt(X_I) \cap \mathcal{D}_{k,l-1}(G)) \subseteq \etype(\mathcal{D}_{k,l-1}(G)) \subseteq E_{k,l-1} \subseteq \bar E_{k,l-1},\] 
by the choice of $l$. Thus, $t=\etype(X_I)=\emex_p(A,B) \in \bar E_{k,l}$, a contradiction.
\end{proof}

\begin{table}
\centering
\setlength{\fboxsep}{1.5pt}
\setlength{\tabcolsep}{0.35em}
\begin{tabular}{@{}cc@{}}
\toprule
\addlinespace[0.25em] $\etype\in\bar E_k$  & $k$\\
\midrule
\vphantom{$\framebox(9,9.5){2}$}$(0,0,0,2)$ & $0$\\
\vphantom{$\framebox(9,9.5){2}$}$(0,1,2,2)$ & $1$\\
\vphantom{$\framebox(9,9.5){2}$}$(1,1,2,1)$ & $1,2$\\
\vphantom{$\framebox(9,9.5){2}$}$(1,2,1,0)$ & $1$\\
\vphantom{$\framebox(9,9.5){2}$}$(1,4,3,1)$ & $1$\\
\bottomrule
\end{tabular}
\hfil
\begin{tabular}{@{}cc@{}}
\toprule
\addlinespace[0.25em] $\etype\in\bar E_k$  & $k$ \\
\midrule
\vphantom{$\framebox(9,9.5){2}$}$(0,0,2,2)$ & $2$\\
\vphantom{$\framebox(9,9.5){2}$}$(1,0,1,1)$ & $\fbox{2},3,\ldots$\\
\vphantom{$\framebox(9,9.5){2}$}$(1,0,2,1)$ & $2,3$\\
\vphantom{$\framebox(9,9.5){2}$}$(1,1,0,0)$ & $2$\\
\vphantom{$\framebox(9,9.5){2}$}$(1,3,0,0)$ & $2$\\
\bottomrule
\end{tabular}
\hfil
\begin{tabular}{@{}cc@{}}
\toprule
\addlinespace[0.25em] $\etype\in\bar E_k$  & $k$ \\
\midrule
\vphantom{$\framebox(9,9.5){2}$}$(1,3,2,1)$ & $\fbox{2}$\\
\vphantom{$\framebox(9,9.5){2}$}$(1,4,0,0)$ & $2$\\
\vphantom{$\framebox(9,9.5){2}$}$(1,4,1,1)$ & $\fbox{2}$\\
\vphantom{$\framebox(9,9.5){2}$}$(1,4,2,1)$ & $\fbox{2}$\\
\vphantom{$\framebox(9,9.5){2}$}$(0,0,1,2)$ & $3,4,\ldots$\\
\bottomrule
\end{tabular}
\hfil 
\begin{tabular}{@{}cc@{}}
\toprule
\addlinespace[0.25em] $\etype\in\bar E_k$  & $k$ \\
\midrule
\vphantom{$\framebox(9,9.5){2}$}$(1,0,1,0)$ & $3,4,\ldots$\\
\vphantom{$\framebox(9,9.5){2}$}$(1,0,2,0)$ & $\fbox{3},4$\\
\vphantom{$\framebox(9,9.5){2}$}$(1,1,2,0)$ & $3$\\
\vphantom{$\framebox(9,9.5){2}$}$(1,3,1,0)$ & $3$\\
\vphantom{$\framebox(9,9.5){2}$}$(1,3,2,0)$ & $3$\\
\bottomrule
\end{tabular}
\vspace{0.1in}
\caption{\label{table:eTypeTable} The elements of $\bar E_k$ for each deficiency $k$. We found instances of every extended type with each deficiency using a computer search except for the five with a $\fbox{\text{box}}$ around them.
}
\end{table}

\begin{proposition}\label{prop:BigProp}
The elements of $\bar E$ are the extended types shown in Table~\ref{table:eTypeTable}.
\end{proposition}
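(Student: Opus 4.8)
The plan is to compute $\bar E$ explicitly by running the recursion in Definition~\ref{Ebardef} to a fixed point. The key structural observation is that the recursion defining $\bar E_{k,l}$ is \emph{monotone and eventually stationary}: each $\bar E_k$ is a finite subset of the finite-looking but a priori unbounded set $\mathbb{E}=\mathbb{T}\times\{0,1,2\}$, and the entries of the types are produced only by the $\mex_0,\mex_1$ operations. Since a $\mex$ of any set of nonnegative integers is bounded by the cardinality of that set, and the number of distinct input extended types stays bounded, the nim-value components $e,o$ cannot grow without limit. So the first task is to establish an a priori bound on the entries appearing in any $\bar E_k$, which confines the whole computation to a finite search space and legitimizes the table as a complete list rather than a partial one.

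\textbf{Reduction to a stabilizing recursion.} First I would show that for each fixed $k$ the chain $\bar E_{k,0}\subseteq\bar E_{k,1}\subseteq\cdots$ stabilizes after finitely many steps, hence $\bar E_k=\bar E_{k,l}$ for some finite $l$; this follows because each $\bar E_{k,l}$ is a subset of a fixed finite set and the chain is nondecreasing by the preceding proposition. Next I would argue that the sequence $\bar E_0,\bar E_1,\bar E_2,\dots$ itself stabilizes: once $\bar E_{k}=\bar E_{k+1}$ as the input feeding the next stage (more precisely, once the pair of ingredients $(\bar E_{k-1},\bar E_k)$ repeats), the recursion produces the same output forever, so $\bar E_{k+1}=\bar E_{k+2}=\cdots$. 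The table encodes exactly this: the third column ``$k$'' records for which deficiencies each extended type appears, and entries such as ``$3,4,\ldots$'' signal that stabilization has occurred from $k=3$ onward. So the proof reduces to computing $\bar E_0,\bar E_1,\bar E_2,\bar E_3,\bar E_4$ and checking $\bar E_4$ (as a generator of the next stage) reproduces itself.

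\textbf{The explicit computation.} The bulk of the argument is then a finite, mechanical unwinding. Starting from $\bar E_0=\{(0,0,0,2)\}$, I would for each $k$ enumerate all pairs $(A,B)\in\mathcal{P}(\bar E_{k,l-1})\times\mathcal{P}(\bar E_{k-1})$, discard those that are neither $0$-feasible nor $1$-feasible, apply the appropriate $\emex_p$, and collect the results; iterating in $l$ until stabilization gives $\bar E_k$. To keep this human-checkable rather than a raw case dump, I would exploit that the feasibility conditions sharply prune the candidate pairs — condition $1\notin\pi_1(A\cup B)$ in the $0$-feasible case forces all inputs to be even types, and the four $1$-feasible conditions tie the smoothness component $s$ to the presence of even options — and that $\emex_0$ always outputs $s=2$ while $\emex_1$ outputs $s=1-\min(\pi_1(A))$, which is determined by whether $A$ contains an even type. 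I would present the outcome as the union over the five rows of deficiency columns, matching Table~\ref{table:eTypeTable}, and verify closure by checking that feeding $(\bar E_3,\bar E_4)$ into the recursion yields nothing outside $\bar E_4$.

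\textbf{Main obstacle.} The genuine difficulty is not any single clever step but \emph{controlling the combinatorial explosion} while remaining rigorous: the input to $\emex_p$ is a pair of subsets of a growing type set, so the number of feasible pairs is large, and this is exactly why the authors ``rely on computer calculations to handle the large number of cases.'' The hard part is therefore organizing the search so the finiteness bound is airtight — proving that no entry $e$ or $o$ can exceed the small bound observed (so that, e.g., a nim-value of $5$ can never arise) — and then certifying that the computer enumeration is exhaustive over the bounded search space. I expect the cleanest route is to prove the a priori bound first (every coordinate stays in a fixed finite range because each $\mex$ is taken over a set of bounded size), reducing the claim to a finite verification, and then cite the computer search as the exhaustive check that produces precisely the extended types listed in Table~\ref{table:eTypeTable}.
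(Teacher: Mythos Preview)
Your overall plan---run the recursion of Definition~\ref{Ebardef} until it stabilizes in $l$ for each $k$, then until it stabilizes in $k$---is exactly what the paper does, and your observation that $\bar E_{k,l}=\bar E_{k,l+1}$ forces $\bar E_{k,l'}=\bar E_{k,l}$ for all $l'\ge l$, and that $\bar E_k=\bar E_{k+1}$ forces $\bar E_{k'}=\bar E_k$ for all $k'\ge k$, is the correct termination certificate.

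The gap is your insistence on an \emph{a priori} finiteness bound, and the argument you sketch for it is circular. You write that ``the number of distinct input extended types stays bounded,'' but that is precisely what needs to be shown: the input $A$ at stage $(k,l)$ ranges over subsets of $\bar E_{k,l-1}$, and if $\bar E_{k,l-1}$ were growing with $l$, so would the $\mex$ values, so would $\bar E_{k,l}$. Bounding $\mex(S)$ by $|S|$ does not help, because $|S|$ is exactly the quantity you have not yet controlled. The paper avoids this entirely: it does not prove termination in advance but simply runs the recursion and \emph{observes} the equalities $\bar E_{k,l}=\bar E_{k,l+1}$ and $\bar E_5=\bar E_6$; those observed equalities are themselves the proof that the process halts and that $\bar E=\bigcup_{k=0}^{5}\bar E_k$. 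No a priori bound is needed or claimed.

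One minor correction: stabilization in $k$ happens at $\bar E_5=\bar E_6$, not at $\bar E_4$. From the table, $(1,0,2,0)\in\bar E_4\setminus\bar E_5$, so $\bar E_4\neq\bar E_5$.
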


\begin{proof}
Using Definition~\ref{Ebardef}, we computed $\bar E$ using a GAP~\cite{GAP} program. The code and its output are available on the companion web page \cite{WEB3}. The results show that the computation of each $\bar E_k$ finishes in finitely many iterations. This is indicated by the equality of $\bar E_{k,l}$ and $\bar E_{k,l+1}$ for some $l$. The results also show that $\bar E_5=\bar E_6$. Hence $\bar E=\bigcup_{k=0}^5\bar E_k$ and the whole computation finishes in finitely many steps.
\end{proof}

Even though we computed $\bar E$ with a computer, we also verified the output by hand. Note that a human can eliminate many of the large number of cases that the computer checked. 

\begin{exam}
We demonstrate the computation of $\bar E_1=\bar E_{1,1}$. We have
\begin{align*}
\bar E_0&=\{(0,0,0,2)\}, \\
\bar E_{1,0}&=\{(0,1,2,2),(1,2,1,0)\}, \\
\bar E_{1,1}&=\{(0,1,2,2),(1,1,2,1),(1,2,1,0),(1,4,3,1)\}, \\
\bar E_{1,2}&=\bar E_{1,1}.
\end{align*}
For example $(0,1,2,2)=\emex_0(\emptyset,\bar E_0)$ and $(1,4,3,1)=\emex_1(\bar E_{1,0},\bar E_0)$. Note that $(\emptyset,\bar E_0)$ is 0-feasible and $(\bar E_{1,0},\bar E_0)$ is 1-feasible. Also, note that $\bar E_1=\bar E_{1,1}$ since $\bar E_{1,2}=\bar E_{1,1}$. 
\end{exam}

\begin{rem}
We found examples of the extended types with every deficiency shown in Table~\ref{table:eTypeTable} using a computer search except for the five listed with a box around them. For instance, it is possible that $(1,0,2,0) \in \bar E_3 \setminus E_3$, but we have not found such an example. However, we have verified that $(1,0,2,0) \in E_4$ by looking at subgroups of \texttt{SmallGroup(500,48)} in GAP's \cite{GAP} SmallGroup database.
\end{rem}

\section{Spectrum of nim-values}

We are now ready to determine the spectrum of nim-values of $\gen(G)$. If the order of $G$ is odd, then Proposition~\ref{prop:OrderGIsOdd} implies that the spectrum of nim-values of $\gen(G)$ is a subset of $\{0,1,2\}$.  If the order of $G$ is even, then Proposition~\ref{prop:BigProp} and the containment 
$$
\{\pi_2(\type(X_{\Phi(G)}))\mid G\text{ is an even group}\}\subseteq \pi_2(E)\subseteq\pi_2(\bar E).
$$
shows that the spectrum of nim-values of $\gen(G)$ is a subset of $\{0,1,2,3,4\}$.  The next example verifies that we have equality in both cases.

\begin{exam}\label{ex:nimExamplesParity}
The nim-values for the following odd and even-ordered groups were computed in~\cite{ErnstSieben}. The groups listed in the table have the smallest possible order for the given parity and nim-value.

\aboverulesep=0ex
\belowrulesep=0ex
\renewcommand{\arraystretch}{1.1}

\begin{center}
\begin{tabular}{@{}c||ccc|ccccc@{}}
\toprule
$G$  & $\mathbb{Z}_{1}$ & $\mathbb{Z}_{3}^{3}$ & $\mathbb{Z}_{3}$  & $\mathbb{Z}_{2}^{3}$ & $\mathbb{Z}_{4}$ & $\mathbb{Z}_{2}$ & $S_{3}$ & $\mathbb{Z}_{6}$\\
\midrule
$\nim(\gen(G))$ & $0$ & $1$ & $2$  & $0$ & $1$ & $2$ & $3$ & $4$\\
\bottomrule
\end{tabular}
\end{center}
\end{exam}

The discussion above together with Example~\ref{ex:nimExamplesParity} immediately implies the following result.

\begin{proposition}
The spectrum of nim-values of $\gen(G)$ for groups with odd order is $\{0,1,2\}$. The spectrum of nim-values of $\gen(G)$ for groups with even order is $\{0,1,2,3,4\}$.
\end{proposition}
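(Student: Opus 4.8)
The plan is to treat the odd-order and even-order cases separately, and in each case to argue by a two-sided containment: an upper bound showing that every nim-value lies in the stated set, and a lower bound exhibiting a group attaining each value. Throughout I would use the fact, recorded earlier, that the nim-value of $\gen(G)$ equals $\pi_2(\type(X_{\Phi(G)}))$, the second component of the type of the structure class containing the empty starting position.

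For the odd case, I would first invoke Proposition~\ref{prop:OrderGIsOdd}, which pins down the type of every structure class of an odd-order group as one of $(1,0,0)$, $(1,2,1)$, $(1,2,0)$, $(1,1,0)$. Reading off the second components gives $0$, $2$, $2$, $1$, so every nim-value lies in $\{0,1,2\}$. For the reverse inclusion I would cite Example~\ref{ex:nimExamplesParity}, in which $\mathbb{Z}_1$, $\mathbb{Z}_3^3$, and $\mathbb{Z}_3$ realize the values $0$, $1$, and $2$ respectively.

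For the even case, the upper bound would come from the containment chain $\{\pi_2(\type(X_{\Phi(G)}))\mid G\text{ is an even group}\}\subseteq\pi_2(E)\subseteq\pi_2(\bar E)$ together with Proposition~\ref{prop:BigProp}. Concretely, I would read the second components off Table~\ref{table:eTypeTable} and observe that $\pi_2(\bar E)=\{0,1,2,3,4\}$, which forces every even nim-value into this set. The reverse inclusion would again invoke Example~\ref{ex:nimExamplesParity}, where $\mathbb{Z}_2^3$, $\mathbb{Z}_4$, $\mathbb{Z}_2$, $S_3$, and $\mathbb{Z}_6$ realize the values $0$ through $4$.

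Finally, I would note that nearly all of the difficulty has already been discharged upstream. The genuine obstacle is Proposition~\ref{prop:BigProp}, which requires knowing that the recursion defining $\bar E_k$ terminates (signaled by the stabilization $\bar E_5=\bar E_6$) so that $\pi_2(\bar E)$ is a finite, explicit set; granting that table, the present statement reduces to extracting a projection and matching it against the realizing examples, which I expect to be routine bookkeeping.
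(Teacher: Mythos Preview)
Your proposal is correct and mirrors the paper's own argument essentially step for step: the upper bounds come from Proposition~\ref{prop:OrderGIsOdd} in the odd case and from the containment chain with Proposition~\ref{prop:BigProp} in the even case, and the lower bounds are supplied by the explicit groups in Example~\ref{ex:nimExamplesParity}. Your closing remark that the real work lies in Proposition~\ref{prop:BigProp} is also exactly the paper's point of view.
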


Now we have our main result.

\begin{theorem}
The spectrum of nim-values of the achievement game $\gen(G)$ for a finite group $G$ is $\{0,1,2,3,4\}$.
\end{theorem}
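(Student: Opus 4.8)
The plan is to reduce the theorem to the two cases already settled by the immediately preceding proposition, which separates finite groups according to the parity of their order. Since every finite group $G$ has $|G|$ either odd or even, the set of nim-values attained by $\gen(G)$ as $G$ ranges over \emph{all} finite groups is precisely the union of the set attained over odd-order groups and the set attained over even-order groups. So the whole argument amounts to forming this union, and no new group-theoretic input is required beyond what precedes the statement.

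First I would invoke Proposition~\ref{prop:OrderGIsOdd} for the odd case. Because the nim-value of $\gen(G)$ is the second component of $\type(X_{\Phi(G)})$, and every structure class of an odd-order group is odd, the four-way formula there forces this second component into $\{0,1,2\}$ (the second coordinates of $(1,0,0)$, $(1,2,1)$, $(1,2,0)$, $(1,1,0)$). The groups $\mathbb{Z}_1$, $\mathbb{Z}_3^3$, and $\mathbb{Z}_3$ from Example~\ref{ex:nimExamplesParity} realize the values $0$, $1$, $2$, so the odd spectrum is exactly $\{0,1,2\}$. Next I would invoke Proposition~\ref{prop:BigProp} together with the containment $\{\pi_2(\type(X_{\Phi(G)})) \mid G \text{ is of even order}\} \subseteq \pi_2(E) \subseteq \pi_2(\bar E)$; reading off the second coordinates appearing in Table~\ref{table:eTypeTable} bounds the even spectrum above by $\{0,1,2,3,4\}$, and the even-order entries of Example~\ref{ex:nimExamplesParity}, namely $\mathbb{Z}_2^3$, $\mathbb{Z}_4$, $\mathbb{Z}_2$, $S_3$, and $\mathbb{Z}_6$, realize each of the five values. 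Hence the even spectrum is exactly $\{0,1,2,3,4\}$.

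Finally I would take the union, $\{0,1,2\} \cup \{0,1,2,3,4\} = \{0,1,2,3,4\}$, which is the asserted spectrum. The last step is immediate, so there is no real obstacle in the theorem itself; all the difficulty resides in the even-order analysis carried out beforehand. In particular, the genuinely hard parts are the computer-assisted determination of the feasible spectrum $\bar E$ in Proposition~\ref{prop:BigProp} (which depends on the smoothness-based option-type restrictions of Propositions~\ref{prop:ForcedEvenOptions1} and~\ref{prop:GeneralizedEvenNonContainment}) and the exhibition of explicit even-order groups realizing the larger nim-values $3$ and $4$, which never occur for the avoidance game and account for the enlargement of the spectrum from $\{0,1,3\}$ to $\{0,1,2,3,4\}$.
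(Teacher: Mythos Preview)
Your proposal is correct and follows essentially the same route as the paper: the theorem is stated immediately after the proposition giving the odd and even spectra separately, with no proof given because it is the union $\{0,1,2\}\cup\{0,1,2,3,4\}$ of the two cases, exactly as you describe. The supporting ingredients you cite (Proposition~\ref{prop:OrderGIsOdd}, the containment in $\pi_2(\bar E)$ via Proposition~\ref{prop:BigProp}, and the explicit groups of Example~\ref{ex:nimExamplesParity}) are the same ones the paper uses.
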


\section{Open Problems and Conjectures}

We close with a handful of open problems and conjectures.  

\begin{enumerate}
\item One can find examples of all of the extended types for each deficiency in Table~\ref{table:eTypeTable} except for the boxed $(1,0,1,1)$, $(1,3,2,1)$, $(1,4,1,1)$, $(1,4,2,1)$ types from $\bar E_2$ and $(1,0,2,0)$ from $\bar E_3$.  Do these five extended types actually occur with the appropriate deficiencies?

\item Computer experimentation shows that adding a type restriction corresponding to the following conjecture eliminates all but $(1,0,2,0)$ of the five boxed extended types from Table~\ref{table:eTypeTable}:  

\vspace{0.1in}

\begin{quote}
\emph{If $G$ is even and $k\ge 0$, then every $X_I\in\mathcal{O}_{k+1}$ has an option $X_L\in\mathcal{E}_k$.}
\end{quote}

\vspace{0.1in}

\noindent
In fact, proving this conjecture for the special case when $X_I \in \mathcal{O}_2$ would be sufficient since the remaining four boxed extended types are all in $\mathcal{O}_2$.  However, we were not able to prove this conjecture.    A natural idea for a proof of this conjecture would be to prove the stronger statement:

\vspace{0.1in}

\begin{quote}
\emph{If $G$ is even and $k\ge 0$, then for every $X_I \in \mathcal{O}_{k+1}$ there is a $t$ of even order such that $X_{\lceil I,t \rceil} \in \mathcal{E}_k$.}
\end{quote}

\vspace{0.1in}

\noindent
Unfortunately, this statement is not true. In private correspondence, Marsden Conder provided a counterexample: \texttt{SmallGroup(240,191)} in GAP's \cite{GAP} SmallGroup database, which is isomorphic to $\mathbb{Z}_2^4 \rtimes \mathbb{Z}_{15}$.  However, this is not a counterexample for the original conjecture.

\item Does a type give algebraic information about the corresponding subgroup?  For instance, does the type characterize what kind of maximal subgroups contain the subgroup? 
\item In~\cite{BeneshErnstSiebenDNG}, the authors provide a checklist in terms of maximal subgroups for determining the nim-value of $\dng(G)$. Is there an analogous set of criteria for determining the nim-value of $\gen(G)$?
\end{enumerate}

\section*{Acknowledgements}

We thank Bob Guralnick and Marston Conder for giving us thoughtful examples to consider.

\bibliographystyle{amsplain}
\bibliography{game}
\end{document}